\tikzset{
  symbol/.style={
    draw=none,
    every to/.append style={
      edge node={node [sloped, allow upside down, auto=false]{$#1$}}}
  }
}
\DeclareMathAlphabet{\mathbbmsl}{U}{bbm}{m}{sl}
\title{Degeneration of Dual Varieties of Hypersurfaces}
\newtheorem{theorem}{Theorem}[section]
\newtheorem{definition}[theorem]{Definition}
\newtheorem{proposition}[theorem]{Proposition}
\newtheorem{example}[theorem]{Example}
\newtheorem{lemma}[theorem]{Lemma}
\newtheorem{thm}{Theorem}
\begin{document}
\author{Yilong Zhang}
\address{Department of Mathematics\\
Purdue University\\
  150 N University St, West Lafayette, IN 47907, USA}
\email{zhan4740@purdue.edu}
\date{Dec 29, 2023}
\subjclass[2010]{14D06, 14J70 primary}
\maketitle

\begin{abstract}
Consider a one-parameter family of smooth projective varieties $X_t$ which degenerate into a simple normal crossing divisor at $t=0$. What is the dual variety in the limit? We answer this question for a hypersurface of degree $d$ degenerate to the union of two hypersurfaces of degree $d_1$ and $d-d_1$ meeting transversely. We find all the irreducible components of the dual variety in the limit and their multiplicities.
\end{abstract}

\section{Introduction}
Consider a family of plane conics $C_t=\{xy+tz^2=0\}\subset \mathbb P^2$ with $t$ varying in a neighborhood of $0$. Then for $t\neq 0$, the dual curve of $C_t$ is a smooth conic, but when $t=0$, the conic consists of two lines, whose dual is the set of two points. The dual map on each individual fiber does not need to have the same dimension. However, if we look at the dual curves $C_t^*$ in family, which is $\{4tuv+w^2\}\subset (\mathbb P^2)^*$ and is a flat family. Take the limit as $t\to 0$, and we still get a degree two curve $\{w^2=0\}$. We prefer the second definition of the dual curve of the central fiber.
%we know that the dual curve of a smooth plane cubic has degree 6. On the other hand, by Plucker formula, the dual curve of a nodal cubic curve has degree 4. If we take a family of plane cubics specializes to a nodal cubics, there are tangent lines specializes to the lines passing through the nodes, but this is not "seen" in the dual curve of a single nodal curve, so we need a notion of dual variety "in the limit".

%To describe the right notion, we should blowup the total space of curves at the nodal point on the central fiber, so the dual map extends to the exceptional divisor $E$, whose image is the set of lines passing through the node (i.e., the dual of the nodal point in $\mathbb P^2$) which acquires with multiplicity two since $E$ has multiplicity two in the blowup family. Thus the "dual curve in the limit"is a degree 6 curve with a component dual curve of the nodal curve which has degree 4 and the other component is a line in the dual space with multiplicity two.

More generally, let $X_d$ be a degree $d\ge 2$ smooth hypersurface of $\mathbb P^{n+1}$ defined by $F_d$; $X_{d_1}$, $X_{d_2}$ be smooth hypersurfaces of degree $d_1$ and $d_2$ respectively defined by $F_{d_1}$ and $F_{d_2}$, with $d=d_1+d_2$. We require $F_d, F_{d_1},F_{d_2}$ to be general, so that their common zero locus is a complete intersection. Besides, by Bertini's theorem, $F^s:=sF_d+F_{d_1}F_{d_2}$ is smooth for $s\neq 0$ when $|s|$ is small enough. So for such $s\neq 0$, there is a dual map on smooth hypersurface $X^s:=\{F^s=0\}$ $$\mathcal{D}_s:X^s\mapsto (\mathbb P^{n+1})^*,$$
\begin{equation}
   x\mapsto \big (\frac{\partial F^s}{\partial x_0}(x),...,\frac{\partial F^s}{\partial x_{n+1}}(x)\big ),\label{dual_map}
\end{equation}
with $\frac{\partial F^s}{\partial x_j}(x)=s\frac{\partial F_d}{\partial x_j}(x)+F_{d_2}\frac{\partial F_{d_1}}{\partial x_j}(x)+F_{d_1}\frac{\partial F_{d_2}}{\partial x_j}(x)$, $j=0,...,n+1$.

The image $(X^s)^*$ is called the dual variety of $X^s$ and it is well known that it is a hypersurface of degree $m=d(d-1)^n$ \cite[Proposition 2.9]{3264}. So this defines a rational section $\mu$ on the sheaf $S^m(V^*)\otimes \mathcal{O}_{\Delta}$ over $\Delta$ which has possibly a pole along $s=0$ where $V=\mathbb C^{n+2}$. By multiplying by a suitable power of $s$, we can assume the section $\mu$ is regular and $\mu(0)\neq 0$. This defines a hypersurface at $s=0$ and will not change the nearby hypersurfaces.
\begin{definition}\normalfont
Let $(X^0)^*$ to be the projective hypersurface $\{\mu=0\}\subseteq (\mathbb P^{n+1})^*$ and call it the dual variety in the limit associated with the family $sF_d+F_{d_1}F_{d_2}=0$.
\end{definition}

Our purpose is to understand each irreducible component of $(X^0)^*$ and the multiplicity.

\begin{theorem}\label{dual_theorem} The dual variety in the limit $(X^0)^*$ associated to the family $\{F^s=0\}_{s\in\Delta}$ is a reducible hypersurface in $(\mathbb P^{n+1})^*$. Then $(X^0)^*$ consists of following components

\begin{itemize}
    \item[I.] dual varieties of $X_{d_1}$ and $X_{d_2}$, reduced;
    \item[II.] dual variety of $X_{d_1}\bigcap X_{d_2}$,   multiplicity two;
    \item[III.] dual variety of $X_d\bigcap X_{d_1}\bigcap X_{d_2}$, reduced.
\end{itemize}
\end{theorem}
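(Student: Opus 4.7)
The plan is to interpret $(X^0)^*$ as the scheme-theoretic limit of the images of the relative Gauss map $\Phi : \mathcal{X} \to (\mathbb{P}^{n+1})^*$ (as a rational map) on the total space $\mathcal{X} = \{F^s(x) = 0\} \subset \mathbb{P}^{n+1}\times\Delta$, defined by $(x,s)\mapsto [\nabla_x F^s(x)]$. For $s \neq 0$ this is the Gauss map of the smooth hypersurface $X^s$ and its image is $(X^s)^*$, so the task is to resolve the indeterminacy of $\Phi$ along $s=0$ and identify the image of the resulting central fiber together with the multiplicity of each component. The indeterminacy locus at $s = 0$ is precisely $X_{d_1}\cap X_{d_2}$, where both summands of $\nabla(F_{d_1}F_{d_2})$ vanish, so I would stratify the central fiber into $X_{d_i}\setminus X_{d_{3-i}}$, $(X_{d_1}\cap X_{d_2})\setminus X_d$, and $X_d\cap X_{d_1}\cap X_{d_2}$, corresponding to increasingly singular behavior of $\mathcal{X}$, and carry out a local analysis on each.

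On each open stratum $X_{d_i}\setminus X_{d_{3-i}}$ the map $\Phi$ is already regular at $s=0$: the formula $\nabla F^s \to F_{d_{3-i}}(x_0)\nabla F_{d_i}(x_0)$ reproduces the ordinary Gauss map of $X_{d_i}$, which accounts for the type I components with reduced multiplicity. For $x_0\in X_{d_1}\cap X_{d_2}$ off $X_d$, the total space is still smooth but the central fiber is nodal along the double locus; in local analytic coordinates with $u = F_{d_1}$, $v = F_{d_2}$ transverse to $X_{d_1}\cap X_{d_2}$, the equation of $\mathcal{X}$ reduces to $uv + sF_d(x_0) + \text{h.o.t.} = 0$. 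An arc in $X^s$ specializing to $x_0$ satisfies $uv\sim -sF_d(x_0)$, and writing $u = s^{1/2}\tilde u$, $v = s^{1/2}\tilde v$ with $\tilde u\tilde v = -F_d(x_0)$, the leading term of the gradient is $s^{1/2}\bigl(\tilde v\,\nabla F_{d_1}(x_0) + \tilde u\,\nabla F_{d_2}(x_0)\bigr)$. As $[\tilde u:\tilde v]$ varies subject to $\tilde u\tilde v = -F_d(x_0)$, this sweeps out the full pencil of hyperplanes through $T_{x_0}(X_{d_1}\cap X_{d_2})$, so the image is $(X_{d_1}\cap X_{d_2})^*$; and each target $[b\nabla F_{d_1}(x_0) + a\nabla F_{d_2}(x_0)]$ is hit by the two arcs $\tilde u = \pm\sqrt{-aF_d(x_0)/b}$, giving the type II multiplicity of two.

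On the stratum $X_d\cap X_{d_1}\cap X_{d_2}$ the total space $\mathcal{X}$ itself becomes singular: with $u, v, w$ local equations of $X_{d_1}, X_{d_2}, X_d$, the local model is the ordinary double point $\{uv + sw = 0\}$. I would resolve each such point by blowing up the singular stratum, producing an exceptional divisor that is a quadric surface $Q\cong \mathbb{P}^1\times\mathbb{P}^1$ over every point. In blow-up coordinates $u = \lambda U$, $v = \lambda V$, $w = \lambda W$, $s = \lambda S$ with $UV + SW = 0$, the gradient factors out $\lambda$ and the rescaled map sends $Q$ to $[V\nabla F_{d_1}(x_0) + U\nabla F_{d_2}(x_0) + S\nabla F_d(x_0)]$; since this depends only on $[U:V:S]$ while $W$ is recovered from $UV + SW = 0$, the map is birational from $Q$ onto the conormal $\mathbb{P}^2$ at $x_0$, yielding $(X_d\cap X_{d_1}\cap X_{d_2})^*$ with reduced multiplicity. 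The main obstacle will be making this third step rigorous: one has to verify that a single blow-up already resolves $\Phi$ along the ODP locus, that no extraneous components arise from the boundary of $Q$ meeting the proper transform of the type II stratum, and that the multiplicity really is one. As a consistency check I would match degrees, summing $d_1(d_1-1)^n + d_2(d_2-1)^n + 2\deg(X_{d_1}\cap X_{d_2})^* + \deg(X_d\cap X_{d_1}\cap X_{d_2})^*$ via the standard Chern-class formula for duals of smooth complete intersections, and confirming that the total equals $d(d-1)^n$.
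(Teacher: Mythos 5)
Your proposal follows essentially the same route as the paper: the same total space $\mathscr{X}=\{sF_d+F_{d_1}F_{d_2}=0\}$, the same stratification of the central fiber with local models $uv+sf=0$ along $X_{d_1}\bigcap X_{d_2}$ and the ordinary double point $uv+sw=0$ along $X_d\bigcap X_{d_1}\bigcap X_{d_2}$, the same blow-up producing the $\mathbb P^1\times\mathbb P^1$ exceptional quadric projecting birationally onto the conormal $\mathbb P^2$, and the same concluding degree count (via the Segre-class formula for duals of complete intersections) to rule out further components. The only cosmetic difference is that for the type II multiplicity you count the two arcs $u=\pm s^{1/2}\tilde u$ instead of blowing up $C$ and checking that $s$ vanishes to order two along the exceptional divisor; these are the same computation after the base change $s=\tau^2$.
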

When $n=1$, the component (III) is empty. When $d_i=1$, then the corresponding component $X_{d_i}^*$ is trivial (dual variety of a hyperplane is a point), and the components in (II) and (III) are cones over dual variety in the hyperplane $X_{d_i}$.

\begin{example}\normalfont
Consider the family of smooth cubic curves degenerate into a conic $Q$ union a line $L$ intersecting transversely, e.g., for an explicit example $$F^s(x_0,x_1,x_2)=s(x_0^3+x_1^3+x_2^3)+x_0(x_0^2+x_1^2+x_2^2)=0.$$

Let $(u,v,w)$ be the coordinates on dual space $(\mathbb P^2)^*$, then the dual variety in the limit $(X^0)^*$ consists of (1) a conic as the dual curve of $Q$; (2) lines $v=\pm iw$ with multiplicity two.

The dual curve of a cubic curve is a sextic curve and the degree decomposition reads $6=2+2\times 2$.
\end{example}

\begin{example}\normalfont
Consider a family of quintic surface $X$ degenerate to quadric surface $Q$ union a cubic surface $C$ meeting transversely. The dual variety in the limit $(X^0)^*$ consists of 
\begin{itemize}
    \item dual variety of $C$, which has degree 12
    \item dual variety of $Q$, which is again a quadric
    \item dual variety of a $(2,3)$-curve, which has degree 18 and multiplicity two
    \item dual variety of $30$ points, which is the union of 30 planes in $\mathbb P^3$.
\end{itemize}

The dual variety of a quintic surface has degree 80. The degree decomposition reads $80=12+2+2\times 18+30$.
\end{example}

\subsection*{Acknowledgement} I would like to thank my advisor Herb Clemens for the helpful discussions. I would like to thank Alexander Kuznetsov for answering my questions.

%It suffices to show that there is no other components in $X_0^*$, we show this by comparing the degrees of the hypersurfaces.

\section{Multiplicity Counting}
Assume $M,N$ are complex varieties and are flat over $\Delta$ via $f,g$. $h$ is a regular map making the diagram commute:

\begin{figure}[ht]
    \centering
\begin{tikzcd}
M \arrow[rr,"h"] \arrow[dr,"f"]&& N\arrow[dl,"g"]\\ &\Delta 
\end{tikzcd}
\end{figure}
 Let $Z$ be a component of $M_0=f^{-1}(0)$, let's assume that $h(Z)$ is a component of $N_0=g^{-1}(0)$. 

The multiplicity $m_Z$ of $Z$ is the order of vanishing $f^*t$ on $Z$, where $t$ is the local equation of $0\in \Delta$. Similarly, the multiplicity $n_{h(Z)}$ of $h(Z)$ is the order of vanishing of $g^*t$ on $h(Z)$. Then by $f^*=h^*\circ g^*$, we have the equality
\begin{equation}
    n_Z=k\cdot m_{h(Z)},\label{multiplicity}
\end{equation}
where $k$ is the ramification index of $h$ at the component $Z$, which can be defined as following:
Choose $p\in Z$ a general point, and $\Delta_p$ a holomorphic disk in $M$ which intersects $Z$ transversly at $p$, then the restriction $h|_{\Delta_p}$ is a $k$-to-$1$ map onto its image.

So we immediately have the following argument:

\begin{proposition}\label{multiplicity_determine_prop}
If $h$ has ramification index one along $Z$, then 
then $n_{h(Z)}$ coincides with $m_Z$.
\end{proposition}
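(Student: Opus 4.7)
The plan is to read off the proposition directly from equation (\ref{multiplicity}) by specializing to $k=1$. The substantive work is establishing that equation in the first place, so I would focus the proof on giving a clean local derivation, and then observe that $k=1$ trivially yields $m_Z=n_{h(Z)}$.

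First I would fix a general point $p\in Z$, chosen outside the branch locus of $h|_Z\colon Z\to h(Z)$, and a small holomorphic disk $\Delta_p\subset M$ through $p$ transverse to $Z$. Because $f$ is flat with $Z\subset M_0$, the function $f^*t$ vanishes on $\Delta_p$ to order exactly $m_Z$ at $p$. Similarly, for generic $p$ the image $h(\Delta_p)\subset N$ meets $h(Z)$ transversely at $h(p)$, so $g^*t$ restricted to $h(\Delta_p)$ vanishes at $h(p)$ to order $n_{h(Z)}$. The commutativity $f=g\circ h$ then gives $f^*t=h^*(g^*t)$, and by definition $h|_{\Delta_p}$ is a $k$-sheeted cover onto its image; pulling back a function with a zero of order $n_{h(Z)}$ through a $k$-sheeted cover produces a zero of order $k\cdot n_{h(Z)}$. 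This is precisely equation (\ref{multiplicity}).

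Setting $k=1$ gives $m_Z=n_{h(Z)}$, which is the proposition. The only step requiring genuine care is the genericity argument needed to ensure that the ramification index is well-defined and that $h(\Delta_p)$ is actually transverse to $h(Z)$ at $h(p)$; I would justify this by noting that the locus where $k$ jumps or where transversality fails is a proper closed subset of $Z$, so a general choice of $(p,\Delta_p)$ avoids it. I do not anticipate a true obstacle here, since the statement is essentially a bookkeeping of vanishing orders through a commutative diagram once the local model for $h$ along $Z$ is fixed.
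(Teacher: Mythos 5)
Your proposal is correct and follows essentially the same route as the paper: the proposition is read off from equation~(\ref{multiplicity}) by setting $k=1$, with that equation itself coming from $f^*t=h^*(g^*t)$ and the local model of a $k$-to-one map on a transverse disk. You simply spell out the derivation of equation~(\ref{multiplicity}) in more detail than the paper does, which is a reasonable elaboration rather than a different argument.
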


\section{Proof of Theorem 1}
We define the total space of the family 
 \begin{equation}
      \mathscr{X}=\{sF_d+F_{d_1}F_{d_2}=0\}\subset \Delta\times \mathbb P^{n+1} \label{family_degree_d}
 \end{equation}
over the disk $\Delta$ with $\pi:\mathscr{X}\to \Delta$ the projection map. So the fiber over $s$ is the hypersurface $\{F^s=0\}$ and the special fiber $F^0=F_{d_1}F_{d_2}$ is reducible. The total space $\mathscr{X}$ is singular along $S:=\{s=0,F_d=0, F_{d_1}=0,F_{d_2}=0\}$ since it has local analytic equation $sx+yz=0$. There is a commutative diagram
\begin{figure}[ht]
    \centering
\begin{tikzcd}
\mathscr{X}\arrow[rr,dashed,"\mathcal{D}"] \arrow[dr,"\pi"]&& \Delta\times (\mathbb P^{n+1})^* \arrow[dl,"\pi_1"]\\ &\Delta, 
\end{tikzcd}
\end{figure}

\noindent where $\mathcal{D}:(s,p)\mapsto (\frac{\partial F^{s}}{\partial x_0}(p),...,\frac{\partial F^{s}}{\partial x_{n+1}}(p))$ is the dual map on each fiber, which is regular outside the locus $C:=\{s=0,F_{d_1}=0,F_{d_2}=0\}$.

Identify $S$ (resp. $C$) with the complete intersection $X_d\bigcap X_{d_1}\bigcap X_{d_2}$ (resp. $X_{d_1}\bigcap X_{d_2}$) in $\mathbb P^{n+1}$. We want to obtain a diagram as in the previous section. To do this, we need to resolve the singular locus of $\mathscr{X}$ and the indeterminacy locus of $\mathcal{D}$. So we blow up $\mathscr{X}$ along $S$ and then blow up the strict transform of the indeterminacy locus $C$ to get a smooth total space $\tilde{\mathscr{X}}$ with regular dual map $\tilde{\mathcal{D}}$, and reach a diagram

\begin{figure}[ht]
    \centering
\begin{tikzcd}
\tilde{\mathscr{X}} \arrow[d,"\lambda"] \arrow[dr,"\tilde{\mathcal{D}}"]\\ \mathscr{X}' \arrow[r,dashed,"\mathcal{D}'"] \arrow[d,"\pi'"]& \Delta\times (\mathbb P^{n+1})^*\arrow[dl,"\pi_1"]\\ \Delta, 
\end{tikzcd}
\end{figure}
\noindent where $\pi'$ is the composite of $\pi$ and the blowup $\sigma:\mathscr{X}'\to \mathscr{X}$. We first prove that

\begin{proposition}\label{Prop_mult=1}
$\tilde{D}$ has multiplicity index one on each component $\tilde{\mathscr{X}}^0$. 
\end{proposition}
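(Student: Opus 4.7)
The plan is to identify the components of $\tilde{\mathscr X}^0$ together with their multiplicities via an explicit local model around $S$, observe that three of the four components have multiplicity one (so ramification index one follows automatically), and then settle the remaining component by a direct computation.

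I would first fix local analytic coordinates around a generic point of $S$. Since $F_d, F_{d_1}, F_{d_2}$ are a regular sequence near $S$, the functions $y_1 = F_d$, $y_2 = F_{d_1}$, $y_3 = F_{d_2}$, together with coordinates $\vec y_S$ along $S$ and the parameter $s$, give local coordinates on $\Delta \times \mathbb P^{n+1}$, and the equation of $\mathscr X$ takes the node form $sy_1 + y_2 y_3 = 0$. In the chart of the first blowup where the $y_1$-direction dominates (so $\eta_2 = y_2/y_1,\ \eta_3 = y_3/y_1$), the strict transform is smooth and given by $s = -y_1\eta_2\eta_3$; the central fiber of $\mathscr X'$ decomposes as $\tilde X_{d_1} \cup \tilde X_{d_2} \cup E_S$, each of multiplicity one, where $E_S = \{y_1=0\}$. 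The strict transform $\tilde C$ of $C$ is $\{\eta_2 = \eta_3 = 0\}$; blowing it up in the chart $\eta_3 = \eta_2 v$ gives $s = -y_1\eta_2^2 v$. Reading off the divisor $\{s=0\}$, the components of $\tilde{\mathscr X}^0$ are $\tilde X_{d_1}, \tilde X_{d_2}, \tilde E_S$ of multiplicity one and $E_C = \{\eta_2 = 0\}$ of multiplicity two.

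The key observation is that whenever $m_Z = 1$, the function $\tilde \pi^* s$ is a local uniformizer on any transverse disk $\Delta_p$ through a general point of $Z$, so the first coordinate of $\tilde{\mathcal D}|_{\Delta_p}$ is itself a uniformizer on the image curve, forcing $\tilde{\mathcal D}|_{\Delta_p}$ to be a local isomorphism onto its image. This settles $\tilde X_{d_1}, \tilde X_{d_2}, \tilde E_S$ at once. For $E_C$ I would work in the second blowup chart: after cancelling the common factor $y_1\eta_2$ from $\nabla F^s$ one finds
\[
   \tilde{\mathcal D} = \bigl[-\eta_2 v\,\nabla F_d + v\,\nabla F_{d_1} + \nabla F_{d_2}\bigr],
\]
evaluated at the point $(y_1,\, y_1\eta_2,\, y_1\eta_2 v,\, \vec y_S) \in \mathbb P^{n+1}$. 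Parametrising a transverse disk to $E_C$ by $\epsilon = \eta_2$ makes $s(\epsilon) = -y_1 v\,\epsilon^2$ even in $\epsilon$, so the ramification index is one precisely when $\tilde{\mathcal D}(\epsilon) \not\equiv \tilde{\mathcal D}(-\epsilon)$ generically in $(\mathbb P^{n+1})^*$. Expanding to first order in $\epsilon$ reduces this to the non-proportionality $B \notin \mathbb C \cdot A$, where
\[
   A = v\,\nabla F_{d_1}(q) + \nabla F_{d_2}(q), \qquad B = -v\,\nabla F_d(q) + y_1 W(v,q),
\]
with $q = (y_1, 0, 0, \vec y_S) \in C$ and $W(v,q)$ built from Hessians of $F_{d_1}, F_{d_2}$ at $q$.

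The hardest step is this non-proportionality. I would verify it by specialising to $y_1 = 0$, i.e., to the intersection $E_C \cap \tilde E_S$: there the Hessian term $W$ drops out and $B$ becomes $-v\,\nabla F_d(q)$ with $q \in S$. The genericity hypothesis on $F_d, F_{d_1}, F_{d_2}$ guarantees that $X_d, X_{d_1}, X_{d_2}$ meet transversely along $S$, so $\nabla F_d(q), \nabla F_{d_1}(q), \nabla F_{d_2}(q)$ are linearly independent at a general $q \in S$; hence $-v\,\nabla F_d(q)$ lies outside $\mathrm{span}(\nabla F_{d_1}(q), \nabla F_{d_2}(q)) \supseteq \mathbb C \cdot A$, proving the claim on $\{y_1=0\} \cap E_C$. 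Since $A \wedge B$ is an algebraic expression on $E_C$ whose nonvanishing is an open condition, the claim propagates from this nonempty open subset to a dense open subset of $E_C$, and $\tilde{\mathcal D}$ has ramification index one along $E_C$ as well.
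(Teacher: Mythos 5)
Your proposal is correct for $n\ge 2$, but it takes a genuinely different --- and far more detailed --- route than the paper. The paper disposes of the proposition in one line: since $\pi$ and $\pi_1$ are both projections to $\Delta$ and $\mathcal D$ is the identity on that factor, the ramification index is one. As you correctly observe, that argument really only handles the multiplicity-one components: there $(\lambda\circ\pi')^*s$ is a uniformizer on a transverse disk, so the first coordinate of $\tilde{\mathcal D}$ is already injective. On the exceptional divisor $E_C$ over $\tilde C$, where $s$ pulls back to $-y_1\eta_2^2v$, the first coordinate is $2$-to-$1$ on a transverse disk, and one must check that the remaining coordinates separate $\epsilon$ from $-\epsilon$. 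Your local model $sy_1+y_2y_3=0$, the two blowup charts, the multiplicities $(1,1,1,2)$, and the expression $\tilde{\mathcal D}=[-\eta_2 v\,\nabla F_d+v\,\nabla F_{d_1}+\nabla F_{d_2}]$ after cancelling $y_1\eta_2$ all agree with the computations the paper carries out in Proposition \ref{Prop_key} (there in the charts $\beta=1$ and $\alpha=1$, and for the purpose of identifying images rather than ramification). Your reduction to $B\notin\C\cdot A$ and the specialization to $E_C\cap\tilde E_S$, where the Hessian term vanishes, the point $q$ stops moving, and the transversality of $X_d,X_{d_1},X_{d_2}$ along $S$ gives $\nabla F_d(q)\notin\mathrm{span}(\nabla F_{d_1}(q),\nabla F_{d_2}(q))$, is sound; the openness-plus-irreducibility propagation to a general point of $E_C$ is also fine. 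Your write-up actually supplies a justification the paper's own proof elides.

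The genuine gap is the case $n=1$. There $S=X_d\cap X_{d_1}\cap X_{d_2}$ is empty for general $F_d,F_{d_1},F_{d_2}$ (three general plane curves share no point), so $E_C\cap\tilde E_S=\emptyset$ and the locus on which you verify $A\wedge B\ne 0$ is empty; the propagation argument then gives nothing. Yet the theorem asserts multiplicity two for component II also when $n=1$ (this is exactly the ``lines $v=\pm iw$ with multiplicity two'' in the paper's cubic example), which via Proposition \ref{multiplicity_determine_prop} requires ramification index one along $E_C$. For $n=1$ you must show directly that at a general $q_0\in C$ (where $f=F_d(q_0)\ne 0$) and general $v$, the first-order vector $B=-vf^{-1}\,\nabla F_d(q_0)+(\text{Hessian terms})$ is not proportional to $A=v\,\nabla F_{d_1}(q_0)+\nabla F_{d_2}(q_0)$; the clean degeneration that makes the Hessian contribution disappear is no longer available, so this needs a separate genericity argument (or some other device). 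Everything else stands.
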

\begin{proof}
This is because both $\pi$ and $\pi_1$ are projections to the first factors $\Delta$, moreover $\mathcal{D}$ is the identity map on this factor.
\end{proof}

\begin{proposition}\label{Prop_key}
The special fiber $\tilde{\mathscr{X}}^0:=(\lambda\circ\pi')^{-1}(0)$ has the following irreducible components: \\
I. strict transforms of $X_{d_1}$ and $X_{d_2}$, reduced;\\
II. the exceptional divisor $\tilde{C}$ over the strict transform of $C$, multiplicity two; \\
III. the exceptional divisor $\tilde{S}$ over $S$, reduced.\\
Moreover, their image under $\tilde{\mathcal{D}}$ are corresponding dual varieties of type I-III stated in Theorem \ref{dual_theorem}.
\end{proposition}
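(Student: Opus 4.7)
My plan is to carry out the two blowups in explicit local analytic coordinates near $S$ and $C$, read off the components and multiplicities of $\tilde{\mathscr{X}}^{0}$ from the factorization of $\tilde{\pi}^{\ast}s$, and then compute the limiting dual map in each chart by dividing out the common factor responsible for the indeterminacy. By generality of the $F$'s, $S = X_{d}\cap X_{d_{1}}\cap X_{d_{2}}$ is smooth of codimension $3$ in $\mathbb{P}^{n+1}$ and $C = X_{d_{1}}\cap X_{d_{2}}$ is smooth of codimension $2$, with $S$ a Cartier divisor in $C$. At a general point of $S$ I choose local coordinates $(s,x,y,z,\vec{w})$ on $\Delta\times\mathbb{P}^{n+1}$ with $x=F_{d}$, $y=F_{d_{1}}$, $z=F_{d_{2}}$ and $\vec{w}$ free along $S$; then $\mathscr{X}$ has equation $sx+yz=0$, an $A_{1}$-singularity along $S$ and smooth elsewhere. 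On the smooth locus of $\mathscr{X}$ the central fiber $\pi^{\ast}\{0\}$ equals the reduced divisor $X_{d_{1}}+X_{d_{2}}$ (in the chart $x\neq 0$ this follows from $s=-yz/x$).

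For the first blowup $\sigma\colon\mathscr{X}'\to\mathscr{X}$ along $S$, I work in the four standard affine charts of the blowup of the origin in $\mathbb{C}^{4}$. In the $x$-chart $(s,y,z)=(xS,xY,xZ)$ the strict transform $\mathscr{X}'$ is smooth with equation $S+YZ=0$, so has coordinates $(x,Y,Z)$ with $S=-YZ$; the exceptional divisor is $\tilde{S}=\{x=0\}$, and the pullback $\sigma^{\ast}s=-xYZ$ exhibits $(\pi')^{\ast}\{0\}=\tilde{S}+\tilde{X}_{d_{1}}+\tilde{X}_{d_{2}}$ with all three components reduced, where $\tilde{X}_{d_{1}}=\{Y=0\}$ and $\tilde{X}_{d_{2}}=\{Z=0\}$ are the strict transforms. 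The strict transform $C'$ of $C$ is the smooth codimension-$2$ subvariety $\{Y=Z=0\}$, contained in $\tilde{X}_{d_{1}}\cap\tilde{X}_{d_{2}}$ and meeting $\tilde{S}$ transversely along a section $\{x=Y=Z=0\}$ over $S$. The three remaining charts of $\sigma$ give compatible pictures, and on the open locus away from $S$ the map $\sigma$ is an isomorphism with $C'=C$.

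Now apply the second blowup $\lambda\colon\tilde{\mathscr{X}}\to\mathscr{X}'$ along the smooth codimension-$2$ center $C'$. In the chart $Z=Y\tau$ one has $\lambda^{\ast}\sigma^{\ast}s=-xY^{2}\tau$, so $\tilde{\pi}^{\ast}\{0\}=\tilde{S}+2\tilde{C}+\tilde{X}_{d_{2}}$ in this chart with $\tilde{C}=\{Y=0\}$ the exceptional divisor of $\lambda$ appearing with multiplicity two; the symmetric chart $Y=Z\eta$ yields $-xZ^{2}\eta$ and picks up $\tilde{X}_{d_{1}}$ analogously. Away from $\tilde{S}$, one blows up the smooth codim-$2$ locus $C=\{y=z=0\}$ directly in the smooth part of $\mathscr{X}$ (using $s=-yz/x$ with $F_{d}$ a unit) and obtains the same local structure. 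Combining these charts gives $\tilde{\mathscr{X}}^{0}=\tilde{X}_{d_{1}}+\tilde{X}_{d_{2}}+\tilde{S}+2\tilde{C}$, proving the multiplicity part of (I)--(III).

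For the images under $\tilde{\mathcal{D}}$, write $\partial_{j}F^{s}=s\,\partial_{j}F_{d}+F_{d_{2}}\,\partial_{j}F_{d_{1}}+F_{d_{1}}\,\partial_{j}F_{d_{2}}$ and divide by the factor responsible for the indeterminacy in each chart. On $\tilde{X}_{d_{i}}$ at a generic point one has $s=F_{d_{i}}=0$ and $F_{d_{j}}\neq 0$ for $j\neq i$, so the limiting partials are proportional to $\nabla F_{d_{i}}$, yielding the dual variety of $X_{d_{i}}$. On $\tilde{C}$ in the chart $Z=Y\tau$, factoring $xY$ out of $\partial_{j}F^{s}$ leaves $\partial_{j}F_{d_{2}}+\tau\,\partial_{j}F_{d_{1}}-Y\tau\,\partial_{j}F_{d}$; restricting to $\tilde{C}=\{Y=0\}$ gives the pencil $[\nabla F_{d_{2}}(p)+\tau\nabla F_{d_{1}}(p)]$, which as $\tau$ ranges over the $\mathbb{P}^{1}$-fiber of $\tilde{C}\to C$ sweeps out exactly the hyperplanes tangent to $C$, i.e.\ the dual variety of $C$. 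On $\tilde{S}$, in the $s$-chart of $\sigma$ (coordinates $s_{0}, Y, Z$ with $X=-YZ$) factoring $s_{0}$ out of $\partial_{j}F^{s}$ gives $\partial_{j}F_{d}+Z\,\partial_{j}F_{d_{1}}+Y\,\partial_{j}F_{d_{2}}$; together with the analogous formulas from the other three charts of $\sigma$, which parameterize the $\mathbb{P}^{1}\times\mathbb{P}^{1}$ fiber of $\tilde{S}\to S$, the image sweeps out the full $\mathbb{P}^{2}$ of linear combinations $[\alpha\nabla F_{d}(p)+\beta\nabla F_{d_{1}}(p)+\gamma\nabla F_{d_{2}}(p)]$ at each $p\in S$, which is the dual variety of $S$. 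The main technical point is to verify that $C'$ meets $\tilde{S}$ transversely along a section, so that the second blowup produces the multiplicity-two component $\tilde{C}$ without perturbing $\tilde{S}$ or the strict transforms $\tilde{X}_{d_{i}}$.
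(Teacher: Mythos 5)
Your proof is correct and follows essentially the same approach as the paper: explicit local analytic coordinates, reading the multiplicities off the order of vanishing of the pulled-back parameter $s$ after the two blowups, and obtaining the limiting dual map by factoring the common vanishing term out of $\partial_j F^s$ (you are in fact somewhat more careful than the paper near $S$, where you track both blowups in one chart and verify the transversality of $C'$ with $\tilde{S}$). The only nitpick is that the projection of the quadric fiber $\{\alpha\beta+\gamma\delta=0\}$ of $\tilde{S}$ onto the $\mathbb{P}^2$ of hyperplanes containing $T_pS$ omits a punctured line, so the image is dense rather than ``the full $\mathbb{P}^2$''; density is all that is needed, as in the paper.
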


\begin{proof}
It suffices to show the special fiber has components of type II and III as described. 

The local analytic equation of $q_0\in C$ in $\mathscr{X}$ is $$u=0,\ v=0$$ in the hypersurface $\{sf(s,u,v)+uv=0\}\subseteq \Delta^3_{s,u,v}\times \Delta^{n-1}$, where $f=f(s,u,v)$ is an analytic function with $f(0,0,0)\neq 0$ and the last of $n-1$ variables are free. If $q_0\in C\setminus S$, then it has a neighborhood unaffected by the first blowup $\sigma$, so the multiplicity of  $\tilde{C}$ is the same as the multiplicity of the exceptional divisor of blowup of $(0,0,0)$ of $\{sf(s,u,v)+uv=0\}\subseteq \Delta^3_{s,u,v}$, which is \textit{two} by straightforward computation.

Now let's prove the image of $\tilde{C}$ under $\tilde{\mathcal{D}}$ is the dual variety of $X_{d_1}\bigcap X_{d_2}$. The total transform of $\lambda$ has local analytic equation over a point $q_0\in C\setminus S$ 

\begin{figure}[ht]
\centering
$sf(s,u,v)+uv=0\ ,\textup{rank}\begin{bmatrix} 
\alpha & \beta &\gamma\\
u & v &s\\
\end{bmatrix}\le 1,\ (\alpha,\beta,\gamma)\in \mathbb P^2.$
\end{figure}

Set $\beta=1$, then $u=\alpha v$, $s=v\gamma$, $v(\gamma f(s,u,v)+\alpha v)=0$, and the dual map becomes
\begin{equation}
    \tilde{\mathcal{D}}:(\gamma,v,\alpha)\mapsto \big (\gamma\frac{\partial F_{d}}{\partial x_j}(q)+\frac{\partial F_{d_1}}{\partial x_j}(q)+\alpha\frac{\partial F_{d_2}}{\partial x_j}(q)\big )_{j=0,...,n+1}\in (\mathbb P^{n+1})^*,\label{dual_map_ext}
\end{equation}
where the local coordinate of $q$ depends on $\gamma,v,\alpha$. As $v$ goes to zero, by equation $\gamma f+\alpha v=0$, $\gamma$ goes to $0$ and we have the dual map on the exceptional divisor

$$\tilde{\mathcal{D}}(q_0)=\big (\frac{\partial F_{d_1}}{\partial x_j}(q_0)+\alpha\frac{\partial F_{d_2}}{\partial x_j}(q_0)\big )_{j=0,...,n+1}\in (\mathbb P^{n+1})^*.$$

It follows that the image at $q_0\in X_{d_1}\bigcap X_{d_2}\setminus X\bigcap X_{d_1}\bigcap X_{d_2}$ is the set of linear combination of normal vectors along $F_{d_1}$ and $F_{d_2}$ at $q_0$, which form a Zariski open dense subset of $(X_{d_1}\bigcap X_{d_2})^*$. This finishes the proof of case II.

In case III (only exist when $n\ge 2$), we choose a point $p_0\in S$, then $S$ has local analytic equation in $\mathscr{X}$ $$x=0,\ y=0,\ z=0$$ in $\{sx+yz=0\}\subseteq \Delta^4_{s,x,y,z}\times \Delta^{n-2}$ where the last $n-2$ variables are free. Since the exceptional divisor $\tilde{S}=\sigma^{-1}(S)$ is generically unaffected by the second blowup $\lambda$, the multiplicity of $\tilde{S}$ is computed over a general point $p_0\in S$. Again, this multiplicity coincides with the multiplicity of the exceptional divisor of the blowup of origin of $sx+yz=0$, which is \textit{one}.

Finally, let's show that the image of $\tilde{S}$ under $\tilde{\mathcal{D}}$ is the dual variety of $X_d\bigcap X_{d_1}\bigcap X_{d_2}$. (By abuse of notation, $\tilde{S}$ is also denoted as its strict transform under the blowup $\lambda$.) The total transform of $\sigma$ has local equation over a general point $p_0\in S$ 
\begin{gather}
sx+yz=0,\ \textup{rank}\begin{bmatrix} 
\alpha & \beta &\gamma &\delta\\
s & x &y &z \\
\end{bmatrix}\le 1,\ (\alpha,\beta,\gamma,\delta)\in \mathbb P^3.\label{quadric_transform}
\end{gather}

If we choose an affine chart $\alpha=1$, then the equation \eqref{quadric_transform} becomes $x=s\beta,y=s\gamma,z=s\delta$ and $s^2(\beta+\gamma\delta)=0$, so by substitution and scaling in projective coordinate,  the dual map \eqref{dual_map} becomes
\begin{equation}
    \mathcal{D}':(s,\gamma,\delta)\mapsto \big (\frac{\partial F_d}{\partial x_j}(p)+\gamma \frac{\partial F_{d_1}}{\partial x_j}(p)+\delta \frac{\partial F_{d_2}}{\partial x_j}(p)\big )_{j=0,...,n+1}\in (\mathbb P^{n+1})^*\label{dual_map_affine}
\end{equation}
around a point $p_0\in S$ and the local coordinate of $p$ depends on $s,\gamma,\delta$. So \eqref{dual_map_affine} implies that the dual map extends to (a Zariski open subset of) $\sigma^{-1}(S)$ by
\begin{equation}
    S\times \mathbb C^2\ni(p_0,\gamma,\delta)\mapsto \big (\frac{\partial F_d}{\partial x_j}(p_0)+\gamma \frac{\partial F_{d_1}}{\partial x_j}(p_0)+\delta \frac{\partial F_{d_2}}{\partial x_j}(p_0)\big )_{j=0,...,n+1}\in (\mathbb P^{n+1})^*.\label{dual_map_on_blowup}
\end{equation}

Note the map is well-defined on this chart due to the assumption that $S$ is smooth complete intersection, so three normal directions $\frac{\partial F_{d}}{\partial x_j},\frac{\partial F_{d_1}}{\partial x_j},\frac{\partial F_{d_2}}{\partial x_j}$ are linearly independent. This shows that the image of $\tilde{S}$ under $\tilde{\mathcal{D}}$ contains a Zariski dense subset of $(X_d\bigcap X_{d_1}\bigcap X_{d_2})^*$, so it has to be the whole dual variety.
\end{proof}

Consequently, the images of type I, II, and III in Proposition \ref{Prop_key} are the corresponding components in Theorem \ref{dual_theorem}. It remains to show there are no other components.

Since the dual varieties $\{(X^s)^*\}_{s\in \Delta}$ as we defined in the previous section is a flat family, in particular, each member has the same degree. In what follows, we will show that the sum of the degree of the components of three types agrees with the degree of the nearby fiber. This proves that $(X^0)^*$ has no other components and, therefore will complete the proof of Theorem \ref{dual_theorem}.

\begin{proposition}\label{degree_identity_corollary}
The following identities hold:
\begin{align}
%\begin{split}
\deg(X_d^*)=\deg(X_{d_1}^*)+\deg(X_{d_2}^*)+2\deg((X_{d_1}\bigcap X_{d_2})^*), \text{if}\ n= 1;\\
\deg(X_d^*)=\deg(X_{d_1}^*)+\deg(X_{d_2}^*)+\deg((X_d\bigcap X_{d_1}\bigcap X_{d_2})^*)+2\deg((X_{d_1}\bigcap X_{d_2})^*), \text{if}\ n\ge 2. \nonumber
%\end{split}
\end{align}
\end{proposition}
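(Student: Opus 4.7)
The identity is an equation among degrees of dual varieties of smooth complete intersections in $\mathbb{P}^{n+1}$: the hypersurfaces $X_d$, $X_{d_1}$, $X_{d_2}$, the codimension-two complete intersection $X_{d_1}\cap X_{d_2}$, and (when $n\ge 2$) the codimension-three complete intersection $X_d\cap X_{d_1}\cap X_{d_2}$. My plan is to compute each $\deg(Y^*)$ by the standard class formula for a smooth non-defective subvariety $Y\subset \mathbb{P}^{n+1}$ of dimension $m$,
\[
\deg(Y^*) \;=\; \int_Y c_{m}\bigl(P^1(\mathcal{O}_Y(1))\bigr),
\]
where $P^1(\mathcal O_Y(1))$ is the bundle of first principal parts of the hyperplane bundle, and then verify the resulting polynomial identity in $d_1,d_2$.

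From the exact sequence $0\to \Omega^1_Y(1)\to P^1(\mathcal{O}_Y(1))\to \mathcal{O}_Y(1)\to 0$, the total Chern class factors as $c(\Omega^1_Y(1))\cdot(1+h)$. For a complete intersection $Y\subset \mathbb{P}^{n+1}$ of multidegree $(e_1,\ldots,e_k)$, the Euler and conormal sequences give $c(T_Y) = (1+h)^{n+2}/\prod_j (1+e_j h)$, from which $c(\Omega^1_Y(1))$ is a polynomial in $h$ obtained by dualizing and twisting. I would apply this to each of the three intersection types (the hypersurface case recovering the classical $d(d-1)^n$ of \cite[Prop.~2.9]{3264}), extract the relevant coefficient of $h^m$, and multiply by $\deg(Y) = \prod_j e_j$.

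Having each $\deg(Y^*)$ as an explicit polynomial in the multidegrees, the proposition reduces to an identity in $d_1, d_2$ (with $d=d_1+d_2$) of degree $n$. This can be verified either by direct expansion, or, more economically, by checking enough integer substitutions to determine the polynomial (e.g.\ small $n$ and $d_1=1$, $d_2=1$, $d_1=d_2$, etc., combined with degree bounds in each variable).

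\textbf{Main obstacle.} The hardest step is the Chern-class bookkeeping in the triple-intersection case for $n\ge 2$: one must expand $(1-h)^{n+2}/\bigl((1-dh)(1-d_1h)(1-d_2h)\bigr)$ --- with care for the sign conventions that arise in passing from $T_Y$ to $\Omega_Y^1(1)$ --- modulo $h^{n-1}$, which is elementary but error-prone. A conceptually cleaner alternative would be an inductive degeneration argument, degenerating $X_{d_1}$ further inside $X_{d_1}\cap X_{d_2}$ to reduce to a lower-dimensional instance of Theorem~\ref{dual_theorem}; but making this precise would require reproving the theorem's analog for dual varieties of smooth complete intersections, which seems more complicated than the direct Chern-class computation above.
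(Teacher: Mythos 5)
Your proposal follows essentially the same route as the paper: the principal-parts formula $\deg(Y^*)=\int_Y c_m\bigl(P^1(\mathcal{O}_Y(1))\bigr)$ is literally the same as the Kleiman Segre-class formula $\int_Y s_m\bigl(N^*_{Y|\mathbb{P}^{n+1}}\otimes\mathcal{O}_Y(1)\bigr)$ that the paper invokes (the twisted Euler and conormal sequences give $c\bigl(P^1(\mathcal{O}_Y(1))\bigr)=s\bigl(N^*_{Y|\mathbb{P}^{n+1}}\otimes\mathcal{O}_Y(1)\bigr)$), and both arguments then reduce to the same polynomial identity in $d_1,d_2$. The only substantive caveat is at the final step: since $n$ is a free parameter, verifying the identity by finitely many integer substitutions only handles each fixed $n$, so you would still need the symbolic expansion or, as the paper does, an induction on $n$.
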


\begin{proof}
First of all, the degree of dual variety of hypersurface is well known \cite[Proposition 2.9]{3264}. In particular, $X_d^*$ has degree $d(d-1)^{n}$, and $X_{d_i}^*$ has degree $d_i(d_i-1)^{n}$, $i=1,2$. So $n=1$ case is the consequence of the identity
$$d(d-1)=d_1(d_1-1)+d_2(d_2-1)+2d_1d_2,$$
which one can readily check by hand.

For $n\ge 2$ case, we need a formula for the dual variety of a complete intersection. According to Kleiman  \cite[p362]{Kleiman}, if $Y\subset \mathbb P^N$ is a smooth variety of dimension $m$ with dual $Y^*$ being a hypersurface, then the degree of the dual variety $\deg(Y^*)$ equals $\int_Ys_m(E)$, where $s_m$ is the Segree class, $E$ is the vector bundle $N^*_{Y|\mathbb P^N}\otimes \mathcal{O}_Y(1)$ over $Y$, and 
 $N^*_{Y|\mathbb P^N}$ is the conormal bundle of $Y$. So we have %This uses the fact that the dual variety $Y^*$ is the image of $\mathbb P_Y(E)$ and $\deg(Y^*)$ is the $(m-1)$-fold intersection of hyperplane class with $Y^*$. 

\begin{lemma} Let $Y$ be a complete intersection of type $(d_1,...,d_k)$ of dimension $n$. Then the degree of the dual variety $\deg(Y^*)$ coincides with the coefficient of $h^n$ of the polynomial

$$\prod_{i=1}^k(1-(d_i-1)h)^{-1}\prod_{i=1}^kd_i.$$
\end{lemma} 
\begin{proof}
  In this case, $E=\oplus_i\mathcal{O}_Y(1-d_i)$. So the Chern polynomial is $c(E)=\prod_i(1+(1-d_i)h)$, where $h$ is the hyperplane class. Then the result follows from the fact that $s(E)=c^{-1}(E)$ and $\int_Yh^n=\deg(Y)=\prod_id_i$.
\end{proof}
Apply the formula to $X_d\bigcap X_{d_1}\bigcap X_{d_2}$, which has dimension $n-2$, so we get its degree of dual variety 
$$N_{d,d_1,d_2}^{n-2}:=\deg((X_d\bigcap X_{d_1}\bigcap X_{d_2})^*)=\sum_{\substack{i+j+k=n-2\\i,j,k\ge 0}}(d_1-1)^i(d_2-1)^j(d-1)^kd_1d_2d.$$  Similarly to the complete intersection $X_{d_1}\bigcap X_{d_2}$ of dimension $n-1$, the degree of its dual variety is
$$N_{d_1,d_2}^{n-1}:=\deg((X_{d_1}\bigcap X_{d_2})^*)=\sum_{\substack{i+j=n-1\\i,j\ge 0}}(d_1-1)^i(d_2-1)^jd_1d_2.$$

So the proof of Corollary \ref{degree_identity_corollary} reduces to show the identity
\begin{equation}\label{equality}
    d(d-1)^n=d_1(d_1-1)^n+d_2(d_2-1)^n+N_{d,d_1,d_2}^{n-2}+2N_{d_1,d_2}^{n-1}.
\end{equation}

We prove by induction on $n$. The base case is $n=2$, one readily checks the following identity holds
$$d(d-1)^2=d_1(d_1-1)^2+d_2(d_2-1)^2+dd_1d_2+2(d_1+d_2-2)d_1d_2.$$

We want to show the equality \eqref{equality}. Assuming the identity \eqref{equality} holds for $n-1$ case, we have 
\begin{align*}
    d(d-1)^n&=d(d-1)^{n-1}(d-1)=[d_1(d_1-1)^{n-1}+d_2(d_2-1)^{n-1}+N_{d,d_1,d_2}^{n-3}+2N_{d_1,d_2}^{n-2}](d-1)\\
    &=d_1(d_1-1)^n+d_2(d_2-1)^n+d_1d_2[(d_1-1)^{n-1}+(d_2-1)^{n-1}]+(d-1)(N_{d,d_1,d_2}^{n-3}+2N_{d_1,d_2}^{n-2}).
\end{align*}

So it reduces to show the identity 
$$N_{d,d_1,d_2}^{n-2}+2N_{d_1,d_2}^{n-1}=(d-1)(N_{d,d_1,d_2}^{n-3}+2N_{d_1,d_2}^{n-2})+d_1d_2[(d_1-1)^{n-1}+(d_2-1)^{n-1}],$$
which is a consequence of identity $(d-1)N_{d,d_1,d_2}^{n-3}=N_{d,d_1,d_2}^{n-2}-dN_{d_1,d_2}^{n-2}$ plus direct computations. This finishes the proof.
\end{proof}

\bibliographystyle{plain}
\bibliography{bibfile}

\end{document}